\newtheorem{theorem}{Theorem}[section]
\newtheorem{lemma}[theorem]{Lemma}
\newtheorem{corollary}[theorem]{Corollary}
\newtheorem{definition}[theorem]{Definition}
\newtheorem{example}[theorem]{Example}
\newtheorem{proposition}[theorem]{Proposition}
\newtheorem{remark}[theorem]{Remark}
\newcommand{\dbar}{\bar{\partial}}
\def\bN{\mathbb{N}}
\def\bZ{\mathbb{Z}}
\def\bC{\mathbb{C}}
\begin{document}

\title[A Fr\"olicher-type inequality for generalized complex manifolds]{A Fr\"olicher-type inequality\\ for generalized complex manifolds}
\author[K. Chan]{Kwokwai Chan}
\address{Department of Mathematics\\ The Chinese University of Hong Kong\\ Shatin\\ Hong Kong}
\email{kwchan@math.cuhk.edu.hk}
\author[Y.-H. Suen]{Yat-Hin Suen}
\address{Department of Mathematics\\ The Chinese University of Hong Kong\\ Shatin\\ Hong Kong}
\email{yhsuen@math.cuhk.edu.hk}

%\date{\today}

\begin{abstract}
We prove a Fr\"olicher-type inequality for a compact generalized complex manifold $M$, and show that the equality holds if and only if $M$ satisfies the generalized $\partial\dbar$-Lemma. In particular, this gives a unified proof of analogous results in the complex and symplectic cases.
\end{abstract}

\maketitle

%\tableofcontents

\section{Introduction}

In \cite{Angella-Tomassini13}, Angella and Tomassini proved the following deep and beautiful {\em Fr\"olicher-type inequality} for a compact complex manifold $(M,J)$ (\cite[Theorems A and B]{Angella-Tomassini13}):
\begin{align}\label{eq:complex_Frolicher_ineq1}
\sum_{p+q = k} \dim_\bC H^{p,q}_{BC}(M) + \sum_{p+q = k} \dim_\bC H^{p,q}_{A}(M) \geq 2 \dim_\bC H^k_{dR}(M;\bC),
\end{align}
where $H^{\bullet,\bullet}_{BC}(M)$ and $H^{\bullet,\bullet}_A(M)$ are the {\em Bott-Chern} and {\em Aeppli} cohomologies of $M$ respectively. In fact, \eqref{eq:complex_Frolicher_ineq1} follows from a stronger inequality:
\begin{align}\label{eq:complex_Frolicher_ineq2}
\dim_\bC H^{p,q}_{BC}(M) + \dim_\bC H^{p,q}_A(M) \geq \dim_\bC H^{p,q}_{\dbar}(M) + \dim_\bC H^{p,q}_{\partial}(M),
\end{align}
by summing up over $p+q=k$ and applying the classical Fr\"olicher inequality \cite{Frolicher55}:
\begin{align*}
\dim_\bC H^k_{\dbar}(M) \geq \dim_\bC H^k_{dR}(M;\bC).
\end{align*}
They also proved that the equality in \eqref{eq:complex_Frolicher_ineq1} holds for all $k \in \bN$ if and only if $M$ satisfies the $\partial\dbar$-Lemma, hence giving an elegant characterization of the validity of the $\partial\dbar$-Lemma.

In a recent work \cite{Angella-Tomassini14}, the same authors generalized their results to an algebraic and more general setting. As a consequence, they obtained an analogous inequality for generalized complex manifolds - a very interesting class of geometric structures first introduced by Hitchin \cite{Hitchin03} and studied in depth by Gualtieri \cite{Gualtieri11}. More precisely, for a compact generalized complex manifold $(M,\mathcal{J})$ of real dimension $2n$, they proved the following inequality \cite[Theorem 4]{Angella-Tomassini14}:
\begin{align}\label{eq:generalized_Frolicher_ineq1}
\dim_\bC GH^k_{BC}(M) + \dim_\bC GH^k_A(M) \geq \dim_\bC GH^k_{\dbar}(M) + \dim_\bC GH^k_{\partial}(M),
\end{align}
where $GH^k_{BC}(M)$, $GH^k_A(M)$ and $GH^k_{\dbar}(M)$ (and $GH^k_{\partial}(M)$) denote the generalized Bott-Chern, Aeppli and Dolbeault cohomologies of $M$ respectively, and showed that $M$ satisfies the generalized $\partial\dbar$-Lemma if and only if the equality in \eqref{eq:generalized_Frolicher_ineq1} holds for all $k\in \bZ$ and the corresponding Hodge and Fr\"olicher spectral sequences degenerate at $E_1$.

For a compact symplectic manifold $(M,\omega)$, their result specializes to the inequality \cite[Theorem 3]{Angella-Tomassini14}:
\begin{align}\label{eq:symplectic_Frolicher_ineq}
\dim_\bC H^k_{BC}(M) + \dim_\bC H^k_{A}(M) \geq 2 \dim_\bC H^k_{dR}(M;\bC),
\end{align}
where $H^\bullet_{BC}(M)$ and $H^\bullet_A(M)$ are the symplectic versions of the Bott-Chern and Aeppli cohomologies respectively, as defined by Tseng and Yau \cite{Tseng-Yau12a, Tseng-Yau12b}, and the equality in \eqref{eq:symplectic_Frolicher_ineq} holds for all $k \in \bN$ if and only if $M$ satisfies the $dd^\Lambda$-Lemma - the symplectic counterpart of the $\partial\dbar$-Lemma.

In this short note, using Hodge theory and a bigrading on differential forms introduced by Cavalcanti \cite{Cavalcanti06}, we give a different, and more geometric, proof of the inequality \eqref{eq:generalized_Frolicher_ineq1} and obtain a stronger statement in the equality case:
\begin{theorem}\label{thm:main_thm}
Let $(M,\mathcal{J})$ be a compact generalized complex manifold of real dimension $2n$. Then, for every $k\in [-n,n] \cap \bZ$, the following inequality between the dimensions of the generalized Bott-Chern and the generalized Dolbeault cohomologies of $M$ holds:
\begin{align}\label{eq:generalized_Frolicher_ineq2}
\dim_\bC GH^k_{BC}(M) \geq \dim_\bC GH^k_{\dbar}(M).
\end{align}
Moreover, the equality in \eqref{eq:generalized_Frolicher_ineq2} holds for all $k\in [-n,n] \cap \bZ$ if and only if $M$ satisfies the generalized $\partial\dbar$-Lemma.
\end{theorem}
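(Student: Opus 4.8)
The plan is to run Hodge theory on Cavalcanti's bigraded complex, extract from the generalized Hodge star operator a pair of dualities that make the inequality self-dual, and combine this with a Hodge-theoretic proof of the weaker Fr\"olicher-type inequality \eqref{eq:generalized_Frolicher_ineq1}. I begin by setting up the analytic framework. Recall Cavalcanti's decomposition $\Omega^\bullet(M;\bC) = \bigoplus_{k=-n}^{n} U^k$ with $d|_{U^k} = \partial + \dbar$, where $\partial\colon U^k \to U^{k+1}$ and $\dbar\colon U^k \to U^{k-1}$ satisfy $\partial^2 = \dbar^2 = 0$ and $\partial\dbar = -\dbar\partial$, so that $GH^k_{\dbar}$, $GH^k_{\partial}$, $GH^k_{BC}$ and $GH^k_A$ are the cohomologies at $U^k$ of the usual complexes built from $\partial$ and $\dbar$. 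Choosing a generalized Hermitian metric compatible with $\mathcal{J}$ gives an $L^2$-inner product on $\Omega^\bullet(M;\bC)$ for which the $U^k$ are mutually orthogonal; write $\partial^\ast,\dbar^\ast$ for the formal adjoints and $\star$ for the induced generalized Hodge star, which maps $U^k$ isomorphically onto $U^{-k}$ and intertwines $\partial$ with $\partial^\ast$ and $\dbar$ with $\dbar^\ast$ up to sign. The Dolbeault Laplacian $\Delta_{\dbar} = \dbar\dbar^\ast + \dbar^\ast\dbar$, its conjugate $\Delta_{\partial}$, and the fourth-order Bott--Chern and Aeppli Laplacians are elliptic and self-adjoint, so each $GH^k_\bullet$ is isomorphic to the corresponding space of harmonic forms in $U^k$, with the standard descriptions $\mathcal{H}^k_{\dbar} = \ker\dbar \cap \ker\dbar^\ast$, $\mathcal{H}^k_{BC} = \ker\partial \cap \ker\dbar \cap \ker(\partial\dbar)^\ast$, $\mathcal{H}^k_A = \ker\partial^\ast \cap \ker\dbar^\ast \cap \ker(\partial\dbar)$.

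Next I would prove two symmetries. Complex conjugation maps $U^k$ onto $U^{-k}$ and interchanges $\partial$ and $\dbar$, so it induces $\dim GH^k_{BC} = \dim GH^{-k}_{BC}$, $\dim GH^k_A = \dim GH^{-k}_A$ and $\dim GH^k_{\dbar} = \dim GH^{-k}_{\partial}$. On the other hand, since $\star$ carries $U^k$ to $U^{-k}$ and conjugates $\Delta_{\dbar}$ to $\Delta_{\dbar}$ and the Bott--Chern Laplacian to the Aeppli Laplacian, it yields $\dim GH^k_{\dbar} = \dim GH^{-k}_{\dbar}$ and $\dim GH^k_{BC} = \dim GH^{-k}_A$. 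Putting these together gives
\begin{equation*}
\dim_\bC GH^k_{BC}(M) = \dim_\bC GH^k_A(M), \qquad \dim_\bC GH^k_{\dbar}(M) = \dim_\bC GH^k_{\partial}(M)
\end{equation*}
for every $k$. I would then establish \eqref{eq:generalized_Frolicher_ineq1} by a Hodge-theoretic argument: using the Hodge decompositions for $\partial$ and $\dbar$ together with the harmonic descriptions above, the identity on forms induces natural morphisms $GH^k_{BC} \to GH^k_{\dbar}$, $GH^k_{BC} \to GH^k_{\partial}$, $GH^k_{\dbar} \to GH^k_A$, $GH^k_{\partial} \to GH^k_A$, and chasing the resulting exact sequences produces the dimension inequality $\dim GH^k_{BC} + \dim GH^k_A \geq \dim GH^k_{\dbar} + \dim GH^k_{\partial}$. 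Feeding in the two symmetries collapses this to $2\dim GH^k_{BC} \geq 2\dim GH^k_{\dbar}$, which is \eqref{eq:generalized_Frolicher_ineq2}; in particular this reproves \eqref{eq:generalized_Frolicher_ineq1}.

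For the equality case, the direction ``generalized $\partial\dbar$-Lemma $\Rightarrow$ equality'' is formal: the Lemma forces all of the natural morphisms above to be isomorphisms, so in particular $GH^k_{BC} \cong GH^k_{\dbar}$ for every $k$. Conversely, if equality holds in \eqref{eq:generalized_Frolicher_ineq2} for all $k$, then by the symmetries the four cohomologies have equal dimension in every degree; inserting this into the exact sequences from the previous step forces all connecting homomorphisms to vanish and all natural morphisms to be isomorphisms, which is one of the equivalent formulations of the generalized $\partial\dbar$-Lemma. The delicate point here — and the reason this is strictly stronger than the characterization obtainable from \eqref{eq:generalized_Frolicher_ineq1} alone, which also needs $E_1$-degeneration of the associated spectral sequences — is that termwise equality in \eqref{eq:generalized_Frolicher_ineq2} is incompatible with the ``zigzag'' configurations in Cavalcanti's complex that keep \eqref{eq:generalized_Frolicher_ineq1} an equality yet obstruct the Lemma: such a configuration would force $\dim GH^k_{BC} < \dim GH^k_{\dbar}$ at one of its extreme degrees. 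The main work is thus the Hodge-theoretic derivation of \eqref{eq:generalized_Frolicher_ineq1} and this last compatibility analysis; the symmetries, once the behaviour of $\star$ on the $U^k$ is pinned down, are routine.
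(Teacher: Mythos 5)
Your outline reproduces the paper's strategy for the inequality itself (Hodge theory for the four Laplacians, the conjugation and $\bar*_G$ symmetries giving $\dim GH^k_{BC}=\dim GH^k_A$ and $\dim GH^k_{\dbar}=\dim GH^k_{\partial}$, then collapsing the Angella--Tomassini inequality \eqref{eq:generalized_Frolicher_ineq1} to \eqref{eq:generalized_Frolicher_ineq2}), but the step where you claim the natural morphisms $GH^k_{BC}\to GH^k_{\dbar}$, $GH^k_{BC}\to GH^k_{\partial}$, $GH^k_{\dbar}\to GH^k_A$, $GH^k_{\partial}\to GH^k_A$ yield ``exact sequences'' whose chase gives $\dim GH^k_{BC}+\dim GH^k_A\geq \dim GH^k_{\dbar}+\dim GH^k_{\partial}$ is not available from those maps alone: you need the Varouchas-type auxiliary spaces such as $A^\bullet=(\mathrm{im}\,\partial\cap\mathrm{im}\,\dbar)/\mathrm{im}\,\partial\dbar$, $B^\bullet=(\ker\dbar\cap\mathrm{im}\,\partial)/\mathrm{im}\,\partial\dbar$, etc., and the two five-term exact sequences they fit into; this is exactly how the paper computes $2\,Gh^k_{BC}=Gh^k_{\dbar}+Gh^{-k}_{\dbar}+f^k+a^{-k}\geq 2\,Gh^k_{\dbar}$. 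That omission is repairable, so the inequality half is essentially fine in spirit though under-justified.

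The genuine gap is the converse of the equality statement, which is precisely the new content of the theorem. Equal dimensions in all degrees do \emph{not} ``force all connecting homomorphisms to vanish and all natural morphisms to be isomorphisms'': running the dimension count through the exact sequences, equality for all $k$ only yields $a^k=f^k=0$ (i.e.\ $\mathrm{im}\,\partial\cap\mathrm{im}\,\dbar=\mathrm{im}\,\partial\dbar$ and $\ker\partial\dbar=\ker\partial+\ker\dbar$) together with $b^k=c^k$ and $d^k=e^k$; it does not force $B^k$ and $D^k$ to vanish, which is what the generalized $\partial\dbar$-Lemma actually requires. This is exactly why Angella--Tomassini had to add the $E_1$-degeneration hypothesis in the generalized setting, and it is where the paper does real work: it passes to Cavalcanti's bigraded complex $\mathcal{A}^{p,q}=U^{p-q}\beta^q$, proves that injectivity of the inclusion-induced map $H^{p,q}_{BC^\beta}(M)\to H^{p,q}_{\dbar^\beta}(M)$ is \emph{equivalent} to the $\partial^\beta\dbar^\beta$-Lemma on $\mathcal{A}^{p,q}$, establishes the propagation claim (injectivity of $\phi^{p,q}_+$ forces surjectivity of $\phi^{p-1,q}_-$, and symmetrically), and then uses the assumed dimension equalities to push any failure of isomorphism along a diagonal of the bicomplex until it contradicts the vanishing of the cohomology in high bidegrees. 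Your last paragraph acknowledges that this ``compatibility analysis'' is the main work but supplies no argument for it, and the one mechanism you do suggest (that a bad zigzag would force $\dim GH^k_{BC}<\dim GH^k_{\dbar}$ at an extreme degree) is neither proved nor how the obstruction actually manifests in the paper's argument, where the failure alternates between the inclusion maps into $H^{\bullet,\bullet}_{\partial^\beta}$ and $H^{\bullet,\bullet}_{\dbar^\beta}$ along a diagonal rather than producing a strict inequality at some $k$. As written, the hardest direction of the theorem is asserted rather than proved.
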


\begin{remark}
By using Hodge theory, we see that $\dim_\bC GH^k_A(M) = \dim_\bC GH^k_{BC}(M)$ and $\dim_\bC GH^k_{\partial}(M) = \dim_\bC GH^k_{\dbar}(M)$ (see Section \ref{subsec:Hodge} and Proposition \ref{prop:dim_equalities_generalized}), so the inequality \eqref{eq:generalized_Frolicher_ineq2} we proved is in fact equivalent to \eqref{eq:generalized_Frolicher_ineq1}. However, again because of our use of Hodge theory, we are able to remove the condition that the Hodge and Fr\"olicher spectral sequences degenerate at $E_1$ in proving the validity of the generalized $\partial\dbar$-Lemma when the equality holds.
\end{remark}

\begin{remark}
As in \cite{Angella-Tomassini14}, our results still hold when the generalized complex structure is twisted by a 3-form $H$; we leave the straightforward generalization of our proofs to the reader.
\end{remark}

Before going into the details of the proof, let us explain what Theorem \ref{thm:main_thm} means in the two extreme cases.
In the complex case, we have

\begin{corollary}
Let $(M,J)$ be a compact complex manifold. Then, for every $k \in [-n,n] \cap \bZ$, the following inequality holds:
\begin{align}\label{eq:complex_Frolicher_ineq3}
\sum_{p-q = k} \dim_\bC H^{p,q}_{BC}(M) \geq \sum_{p-q = k} \dim_\bC H^{p,q}_{\dbar}(M).
\end{align}
Moreover, the equality in \eqref{eq:complex_Frolicher_ineq3} holds for all $k \in [-n,n] \cap \bZ$ (or equivalently, $\dim_\bC H^{p,q}_{BC}(M) = \dim_\bC H^{p,q}_{\dbar}(M)$ for all $p,q \in \bN$) if and only if $M$ satisfies the $\partial\dbar$-Lemma.
\end{corollary}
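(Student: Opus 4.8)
The plan is to read the corollary off Theorem \ref{thm:main_thm} by specializing it to the generalized complex structure $\mathcal{J}$ determined by the complex structure $J$ on $M$, after translating Cavalcanti's bigrading back into the ordinary Dolbeault bigrading. Write $n = \dim_\bC M$, so that $M$ has real dimension $2n$. First I would recall from \cite{Cavalcanti06, Gualtieri11} that for such a $\mathcal{J}$ the $(+i)$-eigenbundle of $\mathcal{J}$ on $TM \oplus T^*M$ is $L = T^{0,1}M \oplus (T^{1,0}M)^*$, that the pure spinor line is $U^n = \wedge^{n,0}T^*M$, and that the resulting decomposition $\wedge^\bullet T^*M \otimes \bC = \bigoplus_{k=-n}^{n} U^k$, with $U^{k} = \wedge^{n-k}\bar{L}\cdot U^n$ (Clifford action), is exactly the decomposition of forms along anti-diagonals:
\begin{align*}
U^k \;=\; \bigoplus_{p-q = k} \wedge^{p,q}T^*M .
\end{align*}
Under this identification the operators in the splitting $d = \partial_{\mathcal{J}} + \dbar_{\mathcal{J}}$, with $\partial_{\mathcal{J}} \colon U^k \to U^{k+1}$ and $\dbar_{\mathcal{J}} \colon U^k \to U^{k-1}$, reduce to the usual Dolbeault operators $\partial$ and $\dbar$ (possibly up to a nonzero scalar, which is irrelevant for cohomology), since this is simply the bidegree splitting of $d$. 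The possible interchange of $\partial_{\mathcal{J}}$ and $\dbar_{\mathcal{J}}$ coming from a different sign convention only replaces $k$ by $-k$, which is immaterial: $k$ ranges over the symmetric set $[-n,n]\cap\bZ$, and $\dim_\bC H^{p,q}_{BC}(M) = \dim_\bC H^{q,p}_{BC}(M)$ by conjugation.

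With this dictionary the definitions of the generalized cohomologies unwind to
\begin{align*}
GH^k_{\dbar}(M) \;\cong\; \bigoplus_{p-q=k} H^{p,q}_{\dbar}(M),
\qquad
GH^k_{BC}(M) \;\cong\; \bigoplus_{p-q=k} H^{p,q}_{BC}(M),
\end{align*}
for all $k \in [-n,n]\cap\bZ$, so passing to dimensions turns \eqref{eq:generalized_Frolicher_ineq2} into precisely \eqref{eq:complex_Frolicher_ineq3}. For the equality case, Theorem \ref{thm:main_thm} asserts that equality holds in \eqref{eq:generalized_Frolicher_ineq2} for every $k$ if and only if $(M,\mathcal{J})$ satisfies the generalized $\partial\dbar$-Lemma, i.e.\ $\mathrm{im}\,\partial_{\mathcal{J}} \cap \ker\dbar_{\mathcal{J}} = \ker\partial_{\mathcal{J}} \cap \mathrm{im}\,\dbar_{\mathcal{J}} = \mathrm{im}\,\partial_{\mathcal{J}}\dbar_{\mathcal{J}}$; under the dictionary this is word for word the classical $\partial\dbar$-Lemma for $(M,J)$ (cf.\ \cite{Cavalcanti06}). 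This gives the first two assertions of the corollary.

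It remains to justify the parenthetical equivalence, namely that the $\partial\dbar$-Lemma holds if and only if $\dim_\bC H^{p,q}_{BC}(M) = \dim_\bC H^{p,q}_{\dbar}(M)$ for all $p,q \in \bN$. One direction is immediate: if these numbers coincide in every bidegree, then summing over each anti-diagonal $p-q=k$ gives equality in \eqref{eq:complex_Frolicher_ineq3} for all $k$, hence the $\partial\dbar$-Lemma by the previous paragraph. For the converse one uses the classical fact that the $\partial\dbar$-Lemma forces the natural maps $H^{p,q}_{BC}(M) \to H^{p,q}_{\dbar}(M)$ to be isomorphisms for all $p,q$ (see, e.g., \cite{Angella-Tomassini13}), so the dimensions agree.

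I expect essentially no difficulty here beyond bookkeeping, given Theorem \ref{thm:main_thm}. The one point that needs genuine care is the dictionary between Cavalcanti's formalism and the Dolbeault picture — identifying which $U^k$ equals $\bigoplus_{p-q=k}\wedge^{p,q}T^*M$, and confirming that $\partial_{\mathcal{J}}$ and $\dbar_{\mathcal{J}}$ really do become $\partial$ and $\dbar$ — because a sign or indexing error there would shift or flip the anti-diagonal index $k$ in the final inequality.
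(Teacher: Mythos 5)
Your proposal is correct and follows essentially the same route as the paper: specialize Theorem \ref{thm:main_thm} using the identification $U^k = \bigoplus_{p-q=k}\Omega^{p,q}(M)$, hence $GH^k_{\sharp}(M) = \bigoplus_{p-q=k}H^{p,q}_{\sharp}(M)$, together with the equivalence of the generalized and ordinary $\partial\dbar$-Lemmas. The extra care you take with the dictionary and with the parenthetical equivalence ($\partial\dbar$-Lemma $\Rightarrow$ $H^{p,q}_{BC}\to H^{p,q}_{\dbar}$ is an isomorphism in each bidegree) matches what the paper uses implicitly.
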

\begin{proof}
When the generalized complex structure is an ordinary complex structure, the generalized $\partial\dbar$-Lemma is equivalent to the ordinary $\partial\dbar$-Lemma.
Also, we have
$$GH^k_{\dbar}(M) = \bigoplus_{p-q=k}H^{p,q}_{\dbar}(M),\quad
GH^k_{BC}(M) = \bigoplus_{p-q=k}H^{p,q}_{BC}(M).$$
So if the $\partial\dbar$-Lemma holds, we have $\dim_\bC H^{p,q}_{BC}(M) = \dim_\bC H^{p,q}_{\dbar}(M)$ for all $p,q \in \bN$ and hence equality in \eqref{eq:complex_Frolicher_ineq3} holds for all $k \in [-n,n] \cap \bZ$. Conversely, if the equality holds for all $k \in [-n,n] \cap \bZ$, then the $\partial\dbar$-Lemma holds by Theorem \ref{thm:main_thm}.
\end{proof}

\begin{remark}
The inequality \eqref{eq:complex_Frolicher_ineq2} of Angella and Tomassini is more refined than \eqref{eq:complex_Frolicher_ineq3} above. This can be seen as follows: Conjugation and the Hodge star operator associated to a given Hermitian metric induce isomorphisms between cohomologies which give the following equalities:
\begin{equation}\label{eq:dim_equalities_complex}
\begin{split}
h^{p,q}_{BC} & = h^{q,p}_{BC} = h^{n-p,n-q}_A = h^{n-q,n-p}_A;\\
h^{p,q}_{\dbar} & = h^{q,p}_{\dbar} = h^{n-p,n-q}_\partial = h^{n-q,n-p}_\partial
\end{split}
\end{equation}
for all $p,q \in \bN$, where $h^{p,q}_\sharp := \dim_\bC H^{p,q}_\sharp(M)$ for $\sharp \in \{\partial, \dbar, A, BC\}$, so that \eqref{eq:complex_Frolicher_ineq2} can be rewritten as
$$h^{p,q}_{BC} + h^{n-q,n-p}_{BC} \geq h^{p,q}_{\dbar} + h^{n-q,n-p}_{\dbar},$$
from which \eqref{eq:complex_Frolicher_ineq3} follows by summing over $p-q = k$.

In retrospect, the fact that we have a stronger inequality (namely, \eqref{eq:complex_Frolicher_ineq2}) in the complex case is because there is a natural bigrading on differential forms; in contrast, we only have an artificial bigrading on differential forms in the generalized complex case (see Section \ref{subsec:bigrading}).
\end{remark}

In the symplectic case, we have
\begin{corollary}[Theorem 4.4 in \cite{Angella-Tomassini14}]
Let $(M,\omega)$ be a compact symplectic manifold. Then, for every $k \in \bN$, the following inequality holds:
\begin{align}\label{eq:symplectic_Frolicher_ineq2}
\dim_\bC H^k_{BC}(M) \geq \dim_\bC H^k_{dR}(M).
\end{align}
Moreover, the equality in \eqref{eq:symplectic_Frolicher_ineq2} holds for all $k \in \bN$ if and only if $M$ satisfies the $dd^\Lambda$-Lemma.
\end{corollary}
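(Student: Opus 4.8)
The plan is to reduce the symplectic corollary directly to Theorem \ref{thm:main_thm} via the standard dictionary between symplectic geometry and generalized complex geometry. First I would recall that a symplectic form $\omega$ on $M^{2n}$ determines a generalized complex structure $\mathcal{J}_\omega$ on $TM\oplus T^*M$, and that under Cavalcanti's bigrading the space of generalized forms is identified with the ordinary de Rham forms, with the generalized Dolbeault operators $\dbar$ and $\partial$ corresponding (up to normalization and a power of $i$) to $d$ and $d^\Lambda = [d,\Lambda]$ respectively. Consequently the generalized Dolbeault cohomology $GH^k_{\dbar}(M)$ is, by the symplectic Hodge/Brylinski-type result, isomorphic to the de Rham cohomology: $\bigoplus_{j} H^{k+2j}_{dR}(M;\bC)$ reindexed appropriately; more precisely $GH^k_{\dbar}(M)\cong H^{n-k}_{dR}(M;\bC)$ (or a shift thereof depending on sign conventions), and $GH^k_{BC}(M)$ coincides with the symplectic Bott-Chern cohomology $H^{n-k}_{BC}(M)$ of Tseng-Yau. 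The first step is thus to pin down these identifications precisely and cite the relevant lemmas from Section \ref{subsec:bigrading} and from \cite{Cavalcanti06, Tseng-Yau12a, Tseng-Yau12b}.

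Given that dictionary, the inequality \eqref{eq:symplectic_Frolicher_ineq2} is literally \eqref{eq:generalized_Frolicher_ineq2} transported along the isomorphisms, so it follows immediately; one only needs to check that the index $k\in[-n,n]\cap\bZ$ on the generalized side covers all $k\in\bN$ on the symplectic side, which it does since de Rham cohomology vanishes outside degrees $0,\dots,2n$ and the shift matches. For the equality statement, I would invoke the last clause of Theorem \ref{thm:main_thm}: equality in \eqref{eq:generalized_Frolicher_ineq2} for all $k$ is equivalent to the generalized $\partial\dbar$-Lemma for $(M,\mathcal{J}_\omega)$. It then remains to observe that, under the symplectic-to-generalized dictionary, the generalized $\partial\dbar$-Lemma for $\mathcal{J}_\omega$ is exactly the $dd^\Lambda$-Lemma for $(M,\omega)$ — this is the content of the translation $\dbar\leftrightarrow d$, $\partial\leftrightarrow d^\Lambda$ and is already noted in \cite{Angella-Tomassini14}. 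So the proof is a two-line deduction: quote Theorem \ref{thm:main_thm}, then quote the symplectic specialization of each ingredient.

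The only real content, and the step I expect to require the most care, is getting all the normalizations and degree shifts consistent: the generalized degree $k$ runs over $[-n,n]$ while symplectic de Rham degree runs over $[0,2n]$, and Cavalcanti's bigrading sends a generalized $k$-form to a sum of ordinary forms whose degrees have a fixed parity, so one must verify that the isomorphism $GH^k_{\dbar}\cong H^{n-k}_{dR}$ (and $GH^k_{BC}\cong H^{n-k}_{BC}$) is stated with the correct sign and that summing/relabeling over $k$ reproduces \eqref{eq:symplectic_Frolicher_ineq} and \eqref{eq:symplectic_Frolicher_ineq2} as displayed. Once the bookkeeping in Section \ref{subsec:bigrading} is in place, no further analysis is needed — in particular, no spectral sequence degeneration hypothesis, thanks to the Hodge-theoretic approach, matching the improvement recorded in the remark after Theorem \ref{thm:main_thm}. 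I would therefore write the proof as: "This is the specialization of Theorem \ref{thm:main_thm} to the generalized complex structure $\mathcal{J}_\omega$ induced by $\omega$, using the identifications $GH^k_{\dbar}(M)\cong H^{n-k}_{dR}(M;\bC)$ and $GH^k_{BC}(M)\cong H^{n-k}_{BC}(M)$ from Section \ref{subsec:bigrading} together with the fact that the generalized $\partial\dbar$-Lemma for $\mathcal{J}_\omega$ is equivalent to the $dd^\Lambda$-Lemma."
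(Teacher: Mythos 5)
Your proposal is correct and follows essentially the same route as the paper: specialize Theorem \ref{thm:main_thm} using Cavalcanti's isomorphism $\varphi:\Omega^k(M)\to U^{n-k}$ (which gives $GH^k_{\dbar}(M)\cong H^{n-k}_{dR}(M;\bC)$ and $GH^k_{BC}(M)\cong H^{n-k}_{BC}(M)$) together with the equivalence of the generalized $\partial\dbar$-Lemma and the $dd^\Lambda$-Lemma. The only slight inaccuracy is attributing these identifications to the bigrading of Section \ref{subsec:bigrading}; they actually come from the symplectic example in Section 2.1 (Cavalcanti's map $e^{i\omega}e^{\Lambda/2i}$), but this does not affect the argument.
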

\begin{proof}
When the generalized complex structure is an ordinary symplectic structure, we have the isomorphisms
$$GH^k_{\dbar}(M)\cong H^{n-k}_{dR}(M;\bC),\quad GH^k_{BC}(M)\cong H^{n-k}_{BC}(M),\quad GH^k_{A}(M)\cong H^{n-k}_{A}(M).$$
Also, the generalized $\partial\dbar$-Lemma is equivalent to the $dd^{\Lambda}$-Lemma. So the results follow immediately from Theorem \ref{thm:main_thm}.
\end{proof}

\begin{remark}
The inequalities \eqref{eq:symplectic_Frolicher_ineq} and \eqref{eq:symplectic_Frolicher_ineq2} are equivalent because the Hodge star operator $*:\Omega^k(M)\rightarrow\Omega^{2n-k}(M)$ (associated to any compatible metric) induces an isomorphism
$$H^k_{BC}(M)\cong H^{2n-k}_A(M),$$
while the Lefschetz operator induces another isomorphism
$$L^{n-k}:H^k_{BC}(M)\cong H^{2n-k}_{BC}(M),$$
(see \cite{Tseng-Yau12a}), so that in fact $\dim_\bC H^k_{BC}(M) = \dim_\bC H^k_A(M)$ for any $k \in \bN$.
\end{remark}

\begin{remark}
Notice that we have
$$\sum_{p-q=k}\dim_\bC H^{p,q}_{\sharp}(M) = \sum_{p+q=n-k}\dim_\bC H^{n-p,q}_{\sharp}(M),$$
for $\sharp\in\{\dbar,BC\}$ and for any $k \in [-n,n] \cap \bZ$. So from the point of view of mirror symmetry, the inequalities \eqref{eq:complex_Frolicher_ineq3} and \eqref{eq:symplectic_Frolicher_ineq2} are mirror to each other.
\end{remark}

%\begin{remark}
%We remark that the inequality \eqref{eq:complex_Frolicher_ineq3} is {\em mirror} to the inequality \eqref{eq:symplectic_Frolicher_ineq2}.
%\end{remark}

\section*{Acknowledgment}
The work of the first author described in this paper was substantially supported by grants from the Research Grants Council of the Hong Kong Special Administrative Region, China (Project No. CUHK404412 $\&$ CUHK400213).

\section{Basics of generalized complex geometry}

In this section, we briefly review the notions and several basic results which we will need in the proof of our main theorem. Basically we will follow the notations in \cite{Cavalcanti06, Gualtieri11}.

\subsection{Generalized Bott-Chern and Aeppli cohomologies and the generalized $\partial\dbar$-Lemma}
Let $M$ be a compact manifold of real dimension $2n$. Recall that a {\em generalized almost complex structure} on $M$ is an endomorphism $\mathcal{J} \in \text{End}(T\oplus T^*)$ satisfying $\mathcal{J}^2 = -1$.

The {\em canonical line bundle} $U^n$ of $(M,\mathcal{J})$ is defined as the complex pure spinor line bundle $U^n \subset \wedge^\bullet T^*\otimes\mathbb{C}$ annihilated by the $i$-eigenbundle $L$ of $\mathcal{J}$:
$$U^n:=\left\{ \varphi \in \wedge^\bullet T^*\otimes\mathbb{C} \mid L\cdot\varphi=0 \right\}.$$
By putting $U^k:=\wedge^{n-k}\overline{L}\cdot U^n$ for $k \in [-n,n] \cap \bZ$, we have a decomposition
$$\wedge^\bullet T^*\otimes\mathbb{C}=U^{-n}\oplus\cdots\oplus U^n,$$
which induces a $\bZ$-grading on differential forms.
Indeed, the space $U^k$ is the $ik$-eigenbundle of $\mathcal{J}$ acting on the spin representation.

Consider the operators
\begin{align*}
\partial = \pi_{k+1} \circ d :U^k\rightarrow U^{k+1},\\
\dbar = \pi_{k-1} \circ d :U^k\rightarrow U^{k-1},
\end{align*}
where $d$ is the exterior derivative and $\pi_k$ denotes the projection onto $U^k$.
By \cite[Theorem 3.15]{Gualtieri11}, a generalized almost complex structure $\mathcal{J}$ is {\em integrable}, i.e. $[L,L]\subset L$, if and only if $d = \partial + \dbar$. From now on, we will assume that $\mathcal{J}$ is integrable so that $(M,\mathcal{J})$ is a compact {\em generalized complex manifold}.

The property that $d^2=0$ is equivalent to
$$\partial^2 = 0,\quad \dbar^2 = 0,\quad \partial\dbar = -\dbar\partial.$$
This in turn implies that the operator $d^{\mathcal{J}}:U^k\rightarrow U^{k+1}\oplus U^{k-1}$ defined by
$$d^{\mathcal{J}}:=-i(\partial-\dbar)$$
satisfies
$$dd^{\mathcal{J}}=-2i\partial\dbar:U^k\rightarrow U^k.$$

\begin{definition}[\cite{Cavalcanti06}]\label{defn:ddbar-lemma}
A generalized complex manifold $(M,\mathcal{J})$ is said to satisfy the {\em generalized $\partial\dbar$-Lemma} (or equivalently, the {\em $dd^{\mathcal{J}}$-Lemma}) {\em on $U^k$} if
$$\text{ker}(\dbar)\cap \text{im}(\partial)\cap U^k = \text{ker}(\partial)\cap \text{im}(\dbar)\cap U^k = \text{im}(\partial\dbar)\cap U^k.$$
We say $(M,\mathcal{J})$ satisfies the {\em generalized $\partial\dbar$-Lemma} if it satisfies the generalized $\partial\dbar$-Lemma on $U^k$ for all $k\in [-n,n]\cap\mathbb{Z}$.
\end{definition}

\begin{definition}\label{defn:cohomologies}
The {\em generalized Bott-Chern, Aeppli and Dolbeault cohomologies} of $(M,\mathcal{J})$ are defined respectively as the following $\bZ$-graded algebras:
$$GH^\bullet_{\partial}(M):=\frac{\text{ker }\partial}{\text{im }\partial},\quad GH^\bullet_{\dbar}(M):=\frac{\text{ker }\dbar}{\text{im }\dbar}$$
and
$$GH^\bullet_{BC}(M):=\frac{\text{ker }d}{\text{im }\partial\dbar},\quad GH^\bullet_A(M):=\frac{\text{ker }\partial\dbar}{\text{im }\partial + \text{im }\dbar}.$$
\end{definition}
\begin{remark}
Since $d^{\mathcal{J}}:=-i(\partial-\dbar)$, the definitions of the generalized Bott-Chern and Aeppli cohomologies here coincide with that in \cite{Tseng-Yau14}.
\end{remark}
The aim of this note is to investigate the relations between these cohomologies and validity of the generalized $\partial\dbar$-Lemma.

\begin{example}
If the generalized complex structure is given by an ordinary complex structure $J$ on $M$, namely, if
$$\mathcal{J}=
\begin{pmatrix}
-J & 0
\\0 & J^*
\end{pmatrix},$$
then $d^{\mathcal{J}}$ is given by the $d^c$ operator defined by $d^c=-i(\partial-\dbar)$, and
the generalized $\partial\dbar$-Lemma is equivalent to the usual $\partial\dbar$-Lemma.

In this case, we have
$$U^k=\bigoplus_{p-q=k}\Omega^{p,q}(M),$$
for $k \in [-n,n] \cap \bZ$ and
the splitting $d=\partial+\dbar$ is exactly the usual Dolbeault splitting. Hence we have
$$GH^k_{\sharp}(M) = \bigoplus_{p-q=k} H_{\sharp}^{p,q}(M),$$
for $\sharp\in\{\partial,\dbar,A,BC\}$.
\end{example}

\begin{example}
When the generalized complex structure $\mathcal{J}$ is an ordinary symplectic structure $\omega$, namely, when
$$\mathcal{J}=
\begin{pmatrix}
0 & -\omega^{-1}
\\\omega & 0
\end{pmatrix},$$
the differential operator $d^{\mathcal{J}}$ is given by the symplectic adjoint $d^{\Lambda} = \Lambda d - d \Lambda$ introduced by Brylinski \cite{Brylinski88}, where $\Lambda$ is the interior product with the bivector $-\omega^{-1}$. The generalized $\partial\dbar$-Lemma is equivalent to the $dd^\Lambda$-Lemma.

In \cite[Theorems 2.2 and 2.3]{Cavalcanti06}, it was shown that the natural map
$$\varphi:\Omega^k(M)\to U^{n-k},\quad \varphi(\alpha)=e^{i\omega}e^{\frac{\Lambda}{2i}}\alpha$$
gives an isomorphism $\Omega^k(M)\cong U^{n-k}$ satisfying
$$\varphi(d\alpha)=\dbar(\varphi(\alpha)),\quad \varphi(d^{\Lambda}\alpha)=-2i\partial(\varphi(\alpha)).$$
This implies that we have the following isomorphisms
\begin{align*}
GH^k_{\dbar}(M) & \cong H^{n-k}_{dR}(M),\quad GH^k_{\partial}(M)\cong H^{n-k}_{d^{\Lambda}}(M),\\
GH^k_{BC}(M) & \cong H^{n-k}_{BC}(M),\quad GH^k_{A}(M)\cong H^{n-k}_{A}(M).
\end{align*}
\end{example}

\subsection{Generalized metric and Hodge theory}\label{subsec:Hodge}

Since the exterior derivative commutes with the conjugation $C:U^k\rightarrow U^{-k}$, the following diagram
\begin{equation*}
\xymatrix{
U^k \ar[d]_{\partial} \ar@{->}[r]^{C} & {U^{-k}} \ar[d]^{\dbar}
\\U^{k+1} \ar@{->}[r]^{C} & U^{-k-1}
}
\end{equation*}
commutes. Hence conjugation gives rise to the isomorphisms:
\begin{align}\label{eq:isom_conjugation}
GH^k_{\partial}(M) \cong GH^{-k}_{\dbar}(M),\quad GH^k_{BC}(M) \cong GH^{-k}_{BC}(M),\quad GH^k_A(M)\cong GH^{-k}_A(M),
\end{align}
for $k \in [-n,n] \cap \bZ$.

On the other hand, recall that a {\em generalized metric} $\mathcal{G}$ on $M$ is defined as a self-adjoint orthogonal transformation $\mathcal{G} \in \text{End}(T\oplus T^*)$ such that $\langle \mathcal{G}v,v \rangle > 0$ for $v \in T\oplus T^* \setminus \{0\}$, where $\langle \cdot, \cdot \rangle$ denotes the natural pairing on $T\oplus T^*$. By choosing a generalized metric $\mathcal{G}$ on $(M,\mathcal{J})$ compatible with the generalized complex structure $\mathcal{J}$ (meaning that $[\mathcal{G},\mathcal{J}]=0$), we can define the generalized Hodge star operator $*_G$ as
$$*_G\alpha=(-1)^{|\alpha|(n-1)}\tau\cdot\alpha,$$
where $\tau:=e_1\cdots e_n \in \text{Clif}(T\oplus T^*)$ and $\{e_i\}$ is an orthonormal basis of the $+1$-eigenbundle of $\mathcal{G}$.
It is not hard to check that $*_G$ preserves the decomposition $\Omega^\bullet(M)=\bigoplus_{k=-n}^nU^k$, i.e. $*_G$ maps $U^k$ to $U^k$ (cf. \cite[Lemma 3.1]{Cavalcanti06}).

Now we define $\partial^*=-\bar{*}_G\partial\bar{*}_G^{-1}$ and $\dbar^*=-\bar{*}_G\dbar\bar{*}_G^{-1}$, where $\bar{*}_G\alpha := *_G\bar{\alpha}$ and consider the various Laplacian operators defined by
\begin{align*}
\Delta_{\partial} & = \partial\partial^*+\partial^*\partial,\quad \Delta_{\dbar} = \dbar\dbar^*+\dbar^*\dbar,\\
\Delta_{BC}&=(\partial\dbar)(\dbar^*\partial^*)+(\dbar^*\partial^*)(\partial\dbar)+(\dbar^*\partial)(\partial^*\dbar)+(\partial^*\dbar)(\dbar^*\partial)+\dbar^*\dbar+\partial^*\partial,
\\
\Delta_A &= (\dbar^*\partial^*)(\partial\dbar)+(\partial\dbar)(\dbar^*\partial^*)+(\partial\dbar^*)(\dbar\partial^*)+(\dbar\partial^*)(\partial\dbar^*)
+\partial\partial^*+\dbar\dbar^*.
\end{align*}
Since these are all elliptic operators and $GH^k_{\sharp}(M) \cong \text{ker }\Delta_\sharp$ for $\sharp \in \{\partial,\dbar,A,BC\}$ (cf. \cite{Schweitzer07}), the cohomology groups in Definition \ref{defn:cohomologies} are all finite dimensional and each cohomology class has a unique harmonic representative.

\begin{lemma}
We have the commutation relations
$$\bar{*}_G\Delta_{\partial}=\Delta_{\partial}\bar{*}_G,\quad \bar{*}_G\Delta_{\dbar}=\Delta_{\dbar}\bar{*}_G,\quad \bar{*}_G\Delta_{BC}=\Delta_A\bar{*}_G.$$
In particular, for $k\in [-n,n]\cap\mathbb{Z}$, we have the following isomorphisms induced by $\bar{*}_G$:
$$GH^k_{\dbar}(M) \cong GH^{-k}_{\dbar}(M),\quad GH^k_{\partial}(M) \cong GH^{-k}_{\partial}(M),\quad GH^k_{BC}(M)\cong GH^{-k}_A(M).$$
As a whole, we get the following equalities in dimensions:
$$Gh^k_{\dbar}=Gh^{-k}_{\dbar}=Gh^k_{\partial}=Gh^{-k}_{\partial},$$
$$ Gh^k_{BC}=Gh^{-k}_A=Gh^k_A=Gh^{-k}_{BC},$$
where we denote $Gh^k_{\sharp}:=\dim_{\mathbb{C}} GH^k_{\sharp}(M)$ for $\sharp\in\{\partial,\dbar,A,BC\}$.
\end{lemma}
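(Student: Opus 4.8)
The plan is to obtain all three commutation relations at once by conjugating the Laplacians by $\bar{*}_G$. Since $\tau = e_1\cdots e_n$ is built from a real orthonormal frame, the complex-linear operator $*_G$ commutes with complex conjugation $C$, and a direct Clifford computation gives $*_G^2 = \tau^2 = \pm\mathrm{id}$; hence $\bar{*}_G = *_G\circ C$ is conjugate-linear with $\bar{*}_G^2 = \pm\mathrm{id}$, so that conjugation by $\bar{*}_G$, written $T\mapsto T^{\flat}:=\bar{*}_G\,T\,\bar{*}_G^{-1}$, is a well-defined involution on complex-linear operators. By the very definitions $\partial^* = -\bar{*}_G\partial\bar{*}_G^{-1}$ and $\dbar^* = -\bar{*}_G\dbar\bar{*}_G^{-1}$ we get $\partial^{\flat} = -\partial^*$ and $\dbar^{\flat} = -\dbar^*$; applying $\flat$ once more and using $(\flat)^2 = \mathrm{id}$ yields $(\partial^*)^{\flat} = -\partial$ and $(\dbar^*)^{\flat} = -\dbar$. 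In short, conjugation by $\bar{*}_G$ interchanges, up to an overall sign, $\partial\leftrightarrow\partial^*$ and $\dbar\leftrightarrow\dbar^*$.

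Feeding these four relations term by term into the formulas for $\Delta_{\partial}$ and $\Delta_{\dbar}$, the minus signs cancel in pairs and I get $\Delta_{\partial}^{\flat} = \partial^*\partial + \partial\partial^* = \Delta_{\partial}$ and, identically, $\Delta_{\dbar}^{\flat} = \Delta_{\dbar}$. These are the first two commutation relations $\bar{*}_G\Delta_{\partial} = \Delta_{\partial}\bar{*}_G$ and $\bar{*}_G\Delta_{\dbar} = \Delta_{\dbar}\bar{*}_G$.

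For the Bott--Chern and Aeppli Laplacians I perform the same substitution in the six-term expression for $\Delta_{BC}$. Applying $\flat$, a factor such as $\partial\dbar$ becomes $\partial^*\dbar^*$ and a factor such as $\dbar^*\partial^*$ becomes $\dbar\partial$; to match the result with $\Delta_A$ I rewrite each such factor using the anticommutation relation $\partial\dbar = -\dbar\partial$ together with its formal adjoint $\dbar^*\partial^* = -\partial^*\dbar^*$, which flips two signs and converts $\partial^*\dbar^*$ into $\dbar^*\partial^*$ and $\dbar\partial$ into $\partial\dbar$; the remaining four terms already land on the nose. Checking the six terms against the six terms defining $\Delta_A$ gives $\Delta_{BC}^{\flat} = \Delta_A$, i.e. $\bar{*}_G\Delta_{BC} = \Delta_A\bar{*}_G$. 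This sign bookkeeping is the only genuine computation and the step most prone to slips; everything else is formal.

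Finally, since $*_G$ preserves each $U^k$ and $C$ sends $U^k$ to $U^{-k}$, the operator $\bar{*}_G$ maps $U^k$ isomorphically onto $U^{-k}$, and by the three intertwining relations it restricts to isomorphisms $\ker(\Delta_{\dbar})\cap U^k\cong\ker(\Delta_{\dbar})\cap U^{-k}$, $\ker(\Delta_{\partial})\cap U^k\cong\ker(\Delta_{\partial})\cap U^{-k}$, and $\ker(\Delta_{BC})\cap U^k\cong\ker(\Delta_A)\cap U^{-k}$. Combining with the Hodge-theoretic identifications $GH^k_{\sharp}(M)\cong\ker(\Delta_{\sharp})\cap U^k$ recalled above gives the asserted isomorphisms $GH^k_{\dbar}(M)\cong GH^{-k}_{\dbar}(M)$, $GH^k_{\partial}(M)\cong GH^{-k}_{\partial}(M)$, $GH^k_{BC}(M)\cong GH^{-k}_A(M)$. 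The two displayed chains of dimension equalities then follow by pairing these with the conjugation isomorphisms \eqref{eq:isom_conjugation}: for the first chain, $\bar{*}_G$ gives $Gh^k_{\dbar} = Gh^{-k}_{\dbar}$ and $Gh^k_{\partial} = Gh^{-k}_{\partial}$ while conjugation gives $Gh^{-k}_{\partial} = Gh^k_{\dbar}$; for the second, $\bar{*}_G$ gives $Gh^k_{BC} = Gh^{-k}_A$ and conjugation gives $Gh^k_{BC} = Gh^{-k}_{BC}$ and $Gh^k_A = Gh^{-k}_A$.
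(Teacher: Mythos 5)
Your proposal is correct and follows essentially the same route as the paper: conjugation by $\bar{*}_G$ interchanges $\partial\leftrightarrow-\partial^*$ and $\dbar\leftrightarrow-\dbar^*$ (the paper records exactly this through its list of termwise relations, leaving the fact that $\bar{*}_G^2$ acts as $\pm\mathrm{id}$ implicit, as you make explicit), and the anticommutation $\partial\dbar=-\dbar\partial$, $\dbar^*\partial^*=-\partial^*\dbar^*$ then matches $\Delta_{BC}^{\flat}$ with $\Delta_A$ term by term. Your passage from the commutation relations to the isomorphisms via harmonic representatives and the combination with the conjugation isomorphisms \eqref{eq:isom_conjugation} is exactly what the paper intends, just spelled out more fully.
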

\begin{proof}
We have
$$(\partial\dbar)(\dbar^*\partial^*)\bar{*}_G=\bar{*}_G(\partial^*\dbar^*)(\dbar\partial),\quad
(\dbar^*\partial^*)(\partial\dbar)\bar{*}_G=\bar{*}_G(\dbar\partial)(\partial^*\dbar^*),$$
$$(\dbar^*\partial)(\partial^*\dbar)\bar{*}_G=\bar{*}_G(\dbar\partial^*)(\partial\dbar^*),\quad
(\partial^*\dbar)(\dbar^*\partial)\bar{*}_G=\bar{*}_G(\partial\dbar^*)(\dbar\partial^*),$$
and
$$\dbar^*\dbar\bar{*}_G=\bar{*}_G\dbar\dbar^*,\quad \partial^*\partial\bar{*}_G=\bar{*}_G\partial\partial^*,$$
$$\dbar\dbar^*\bar{*}_G=\bar{*}_G\dbar^*\dbar,\quad \partial\partial^*\bar{*}_G=\bar{*}_G\partial^*\partial.$$
Also $\partial\dbar=-\dbar\partial$, so we have $\partial^*\dbar^*=-\dbar^*\partial^*$. It is now straightforward to see that the desired commutation relations
\begin{align*}
\bar{*}_G\Delta_{\partial}=\Delta_{\partial}\bar{*}_G,\quad \bar{*}_G\Delta_{\dbar}=\Delta_{\dbar}\bar{*}_G,\quad \bar{*}_G\Delta_{BC}=\Delta_A\bar{*}_G
\end{align*}
hold.
\end{proof}

Together with the isomorphisms \eqref{eq:isom_conjugation} induced by conjugation, we obtain the following equalities between dimensions of cohomologies, analogous to those \eqref{eq:dim_equalities_complex} in the complex case:
\begin{proposition}\label{prop:dim_equalities_generalized}
For a compact generalized complex manifold $(M,\mathcal{J})$ of real dimension $2n$, we have
\begin{equation}\label{eq:dim_equalities_generalized_1}
\begin{split}
Gh^k_{\dbar} & = Gh^{-k}_{\dbar} = Gh^k_{\partial} = Gh^{-k}_{\partial},\\
Gh^k_{BC} & = Gh^{-k}_A = Gh^k_A = Gh^{-k}_{BC},
\end{split}
\end{equation}
$k\in [-n,n]\cap\mathbb{Z}$.
\end{proposition}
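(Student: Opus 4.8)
The plan is to obtain Proposition~\ref{prop:dim_equalities_generalized} purely formally, by composing the two families of cohomology isomorphisms already established: on the one hand the conjugation isomorphisms \eqref{eq:isom_conjugation}, namely $GH^k_\partial(M)\cong GH^{-k}_{\dbar}(M)$, $GH^k_{BC}(M)\cong GH^{-k}_{BC}(M)$ and $GH^k_A(M)\cong GH^{-k}_A(M)$; and on the other hand the isomorphisms induced by $\bar{*}_G$ in the preceding Lemma, namely $GH^k_{\dbar}(M)\cong GH^{-k}_{\dbar}(M)$, $GH^k_\partial(M)\cong GH^{-k}_\partial(M)$ and $GH^k_{BC}(M)\cong GH^{-k}_A(M)$. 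Passing to dimensions, each of these becomes an equality of non-negative integers, finite by the ellipticity of the four Laplacians and the Hodge-theoretic identifications $GH^k_\sharp(M)\cong\ker\Delta_\sharp$.

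First I would treat the Dolbeault-type chain. From $\bar{*}_G$ we get $Gh^k_{\dbar}=Gh^{-k}_{\dbar}$ and $Gh^k_\partial=Gh^{-k}_\partial$; from conjugation we get $Gh^k_\partial=Gh^{-k}_{\dbar}$. Splicing these three equalities together gives $Gh^k_{\dbar}=Gh^{-k}_{\dbar}=Gh^k_\partial=Gh^{-k}_\partial$ for every $k\in[-n,n]\cap\bZ$, which is the first line of \eqref{eq:dim_equalities_generalized_1}.

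Next I would treat the Bott--Chern/Aeppli chain. The $\bar{*}_G$-isomorphism gives $Gh^k_{BC}=Gh^{-k}_A$; conjugation gives $Gh^{-k}_A=Gh^k_A$; and applying the $\bar{*}_G$-isomorphism with $k$ replaced by $-k$ gives $Gh^{-k}_{BC}=Gh^k_A$, hence $Gh^k_A=Gh^{-k}_{BC}$. Together these yield $Gh^k_{BC}=Gh^{-k}_A=Gh^k_A=Gh^{-k}_{BC}$, which is the second line of \eqref{eq:dim_equalities_generalized_1}.

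There is essentially no analytic obstacle here: all the real content — the commutation relations $\bar{*}_G\Delta_\partial=\Delta_\partial\bar{*}_G$, $\bar{*}_G\Delta_{\dbar}=\Delta_{\dbar}\bar{*}_G$, $\bar{*}_G\Delta_{BC}=\Delta_A\bar{*}_G$, and Hodge theory for elliptic operators — has already been supplied. The one point worth flagging is that neither family of isomorphisms alone suffices: the $\bar{*}_G$-isomorphisms relate $GH^k$ to $GH^{-k}$ without changing the operator (apart from the exchange $BC\leftrightarrow A$), while conjugation exchanges $\partial\leftrightarrow\dbar$ and fixes $BC$ and $A$; it is only by composing the two that one bridges, for instance, $GH^k_{\dbar}$ and $GH^k_\partial$, or $GH^k_A$ and $GH^{-k}_{BC}$. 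Once this bookkeeping is checked against the six equalities appearing in Proposition~\ref{prop:dim_equalities_generalized}, the proof is complete.
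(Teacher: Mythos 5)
Your argument is correct and coincides with the paper's own proof: the Proposition there is obtained exactly by combining the conjugation isomorphisms \eqref{eq:isom_conjugation} with the $\bar{*}_G$-induced isomorphisms of the preceding Lemma and passing to (finite, by elliptic Hodge theory) dimensions. Your explicit splicing of the two chains of equalities is just a slightly more detailed write-up of the same bookkeeping.
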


\subsection{A bigrading on differential forms}\label{subsec:bigrading}

To analyze the equality case of our inequality, we need one more ingredient, namely, a bigrading on the complex of differential forms introduced by Cavalcanti \cite[Section 5]{Cavalcanti06}, who mimicked the constructions of Goodwillie \cite{Goodwillie85} and Brylinski \cite{Brylinski88}.

We consider a formal element $\beta$ of degree 2, and the {\em canonical complex} $\mathcal{A}^{\bullet,\bullet}$ defined by
$$\mathcal{A}^{p,q} = U^{p-q}\beta^q$$
for $p,q \in \bN$.
We extend the exterior derivative $d$ by
$$d^{\beta}(\alpha\beta^k)=(\partial\alpha)\beta^k+(\dbar\alpha)\beta^{k+1}.$$
Then $d^{\beta}$ splits into two components
$$\partial^{\beta}:\mathcal{A}^{p,q}\rightarrow\mathcal{A}^{p+1,q},\quad\dbar^\beta:\mathcal{A}^{p,q}\rightarrow\mathcal{A}^{p,q+1}.$$

\begin{definition}
A generalized complex manifold is said to satisfy the {\em $\partial^{\beta}\dbar^{\beta}$-Lemma on $\mathcal{A}^{p,q}$} if
$$ker(\dbar^{\beta})\cap Im(\partial^{\beta})\cap\mathcal{A}^{p,q}=ker(\partial^{\beta})\cap Im(\dbar^{\beta})\cap\mathcal{A}^{p,q}=Im(\partial^{\beta}\dbar^{\beta})\cap\mathcal{A}^{p,q}.$$
\end{definition}

The following lemma is immediate:
\begin{lemma}\label{lem:ddbar_vs_ddbarbeta}
The generalized $\partial\dbar$-Lemma holds if and only if the $\partial^{\beta}\dbar^{\beta}$-Lemma holds on $\mathcal{A}^{p,q}$ for all $p,q\in \bN$.
\end{lemma}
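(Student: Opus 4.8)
The plan is to show that the two conditions --- the generalized $\partial\dbar$-Lemma, and the $\partial^{\beta}\dbar^{\beta}$-Lemma on every $\mathcal{A}^{p,q}$ --- are literally translations of each other under the identification $\mathcal{A}^{p,q} = U^{p-q}\beta^q$. The key observation is that multiplication by $\beta^q$ is simply a bookkeeping device: for a fixed $q$, the map $\alpha \mapsto \alpha\beta^q$ is a linear isomorphism $U^{p-q} \xrightarrow{\sim} \mathcal{A}^{p,q}$, and by the very definition $d^{\beta}(\alpha\beta^q) = (\partial\alpha)\beta^q + (\dbar\alpha)\beta^{q+1}$ we see that $\partial^{\beta}$ acts as $\partial$ on the $U$-factor (keeping the $\beta$-power fixed) while $\dbar^{\beta}$ acts as $\dbar$ on the $U$-factor (and raises the $\beta$-power by one). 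Thus, tracking the $\beta$-grading faithfully records the $\bZ$-grading by $U^k$: the subspace $\mathcal{A}^{p,q}$ corresponds to "$k = p-q$ with $q$ applications of $\dbar$ accounted for."

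First I would make the bookkeeping precise. For each fixed $q \in \bN$, I would check that $\partial^{\beta}(\alpha\beta^q) = (\partial\alpha)\beta^q$ and $\dbar^{\beta}(\alpha\beta^q) = (\dbar\alpha)\beta^{q+1}$, so that under the isomorphism $U^{k} \cong \mathcal{A}^{k+q,q}$ (taking $p = k+q$) the operator $\partial^{\beta}$ restricted to $\mathcal{A}^{k+q,q} \to \mathcal{A}^{k+q+1,q}$ is intertwined with $\partial: U^{k} \to U^{k+1}$, while $\dbar^{\beta}: \mathcal{A}^{k+q,q} \to \mathcal{A}^{k+q,q+1}$ is intertwined with $\dbar: U^{k} \to U^{k-1}$ (since $k-1 = (k+q)-(q+1) = p - (q+1)$). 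Consequently $\ker(\partial^{\beta})\cap\mathcal{A}^{p,q}$, $\operatorname{Im}(\partial^{\beta})\cap\mathcal{A}^{p,q}$, $\ker(\dbar^{\beta})\cap\mathcal{A}^{p,q}$, $\operatorname{Im}(\dbar^{\beta})\cap\mathcal{A}^{p,q}$, and $\operatorname{Im}(\partial^{\beta}\dbar^{\beta})\cap\mathcal{A}^{p,q}$ are the images under $\alpha\mapsto\alpha\beta^q$ of, respectively, $\ker(\partial)\cap U^{p-q}$, $\operatorname{Im}(\partial)\cap U^{p-q}$, $\ker(\dbar)\cap U^{p-q}$, $\operatorname{Im}(\dbar)\cap U^{p-q}$, and $\operatorname{Im}(\partial\dbar)\cap U^{p-q}$. (One small point to watch: $\operatorname{Im}(\partial^{\beta})\cap\mathcal{A}^{p,q}$ only involves $\partial^{\beta}$ applied to $\mathcal{A}^{p-1,q}$, which sits in the same $\beta$-power $q$, so there is no mixing of $\beta$-powers; similarly $\dbar^{\beta}$ into $\mathcal{A}^{p,q}$ comes from $\mathcal{A}^{p,q-1}$, again a single $\beta$-power on each side.)

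Having set up this dictionary, the lemma follows directly. For the "only if" direction: if the generalized $\partial\dbar$-Lemma holds, then for every $k\in[-n,n]\cap\bZ$ the three subspaces of $U^k$ coincide; multiplying through by $\beta^q$ (for arbitrary $q$, with $p = k+q$) gives the equality of the three subspaces of $\mathcal{A}^{p,q}$, i.e.\ the $\partial^{\beta}\dbar^{\beta}$-Lemma on $\mathcal{A}^{p,q}$ for all $p,q\in\bN$. For the "if" direction: given $k\in[-n,n]\cap\bZ$, choose any $q\in\bN$ with $p := k+q \geq 0$ (e.g.\ $q = \max(0,-k)$, so $p = \max(k,0)\geq 0$); the $\partial^{\beta}\dbar^{\beta}$-Lemma on $\mathcal{A}^{p,q}$ then transports back to the generalized $\partial\dbar$-Lemma on $U^{k}$. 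Since $k$ was arbitrary in $[-n,n]\cap\bZ$, we get the generalized $\partial\dbar$-Lemma.

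I do not anticipate a serious obstacle here --- this is genuinely an "immediate" lemma, as the paper says --- but the one thing that requires a moment's care is confirming that the intersections in the two Lemma statements really do respect the $\beta$-grading, so that the equivalence can be checked $\beta$-degree by $\beta$-degree without any cross-terms; once that is observed, everything is a transparent reindexing ($p\leftrightarrow k+q$).
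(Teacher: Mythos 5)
Your proof is correct and is exactly the intended argument: the paper offers no proof at all (it declares the lemma ``immediate''), and your unpacking of the dictionary $\mathcal{A}^{p,q}=U^{p-q}\beta^q$, with $\partial^{\beta}$ and $\dbar^{\beta}$ intertwined with $\partial$ and $\dbar$ degree by degree in $\beta$, is the content behind that word. The one point I would adjust is your choice $q=\max(0,-k)$ in the ``if'' direction: this puts $(p,q)$ on the boundary of the index range, where the adjacent bidegrees $(p-1,q)$, $(p,q-1)$, $(p-1,q-1)$ that your own dictionary invokes for $\operatorname{Im}(\partial^{\beta})$, $\operatorname{Im}(\dbar^{\beta})$ and $\operatorname{Im}(\partial^{\beta}\dbar^{\beta})$ may fall outside the complex (depending on whether one allows negative powers of $\beta$); choosing instead any $p,q\geq 1$ with $p-q=k$ avoids the issue entirely and costs nothing, since every $k\in[-n,n]\cap\bZ$ is so representable.
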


We can also define the generalized Bott-Chern, Aeppli and Dolbeault cohomologies with respect to the operators $\partial^\beta, \dbar^\beta$ in exactly the same way as in Definition \ref{defn:cohomologies}. Recall that the differential complex $\Omega^\bullet(M)$ sits inside $\mathcal{A}^{\bullet,\bullet}$ via the map (\cite[Section 5]{Cavalcanti06})
$$\tau: U^k \to \bigoplus_{p-q=k}\mathcal{A}^{p,q} = \bigoplus_{p-q=k}U^{p-q}\beta^q$$
defined by
$$\tau(\alpha)=\sum_{q \in \bZ} \alpha\beta^q.$$
The sum is in fact finite since one can only write $k$ in the form $p-q$ in finitely many ways. One can check that
$$\tau(\partial\alpha)=\partial^{\beta}\tau(\alpha),\quad \tau(\dbar\alpha)=\dbar^{\beta}\tau(\alpha).$$
Since each piece $\mathcal{A}^{p,q}$ is nothing but (isomorphic to) $U^{p-q}$, we obtain the isomorphisms
$$H^{p,q}_{\sharp^{\beta}}(M)\cong GH^{p-q}_{\sharp}(M),$$
for $\sharp\in\{\partial,\dbar,BC,A\}$.

Now let $\mathcal{A}^k:=\bigoplus_{p+q=k}\mathcal{A}^{p,q}$. The bigrading naturally gives the bounded filtrations:
\begin{align*}
F^p\mathcal{A} & := \bigoplus_{q}\bigoplus_{r\geq p}\mathcal{A}^{r,q},\\
F^p\mathcal{A}^m & := \bigoplus_{r\geq p}\mathcal{A}^{r,m-r},
\end{align*}
from which we obtain the {\em canonical spectral sequence}, which converges to the cohomology of $d^\beta$. The first term of the spectral sequence is given by $E_1^{p,q} = GH^{p-q}_{\dbar}(M)$.
If the generalized $\partial\dbar$-Lemma holds, then this spectral sequence degenerates at $E_1$;
conversely, if the canonical spectral sequence degenerates at $E_1$ and the decomposition of $\Omega^\bullet(M)$ by $U^k$ induces a decomposition in cohomology, then the generalized $\partial\dbar$-Lemma holds (\cite{DGMS75}; for a more detailed discussion, see \cite[Section 5]{Cavalcanti06}).
This is a characterization of the generalized $\partial\dbar$-Lemma in terms of cohomological decomposition.

\section{Proof of Theorem~\ref{thm:main_thm}}

We are now ready to prove our main results.

\begin{theorem}
For a compact generalized complex manifold $(M,\mathcal{J})$ of real dimension $2n$, we have the inequality
$$Gh^k_{BC} \geq Gh^k_{\dbar}$$
for all $k \in [-n,n] \cap \bZ$.
\end{theorem}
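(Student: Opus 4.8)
The plan is to prove the inequality $Gh^k_{BC} \geq Gh^k_{\dbar}$ by exhibiting a natural surjective linear map from the generalized Bott-Chern cohomology onto the generalized Dolbeault cohomology. First I would recall that, since $\ker d = \ker\partial \cap \ker\dbar$ and $\mathrm{im}\,\partial\dbar \subseteq \mathrm{im}\,\dbar$, there is a well-defined map
$$GH^k_{BC}(M) = \frac{\ker d \cap U^k}{\mathrm{im}\,\partial\dbar \cap U^k} \longrightarrow \frac{\ker\dbar \cap U^k}{\mathrm{im}\,\dbar \cap U^k} = GH^k_{\dbar}(M),$$
sending $[\alpha]_{BC} \mapsto [\alpha]_{\dbar}$. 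The point will be to show this map is surjective.

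To establish surjectivity I would invoke Hodge theory from Section~\ref{subsec:Hodge}: every class in $GH^k_{\dbar}(M)$ has a unique $\Delta_{\dbar}$-harmonic representative $\alpha \in U^k$, which satisfies $\dbar\alpha = 0$ and $\dbar^*\alpha = 0$. The natural thing to try is to show that such an $\alpha$ is automatically $d$-closed, i.e. that $\partial\alpha = 0$ as well; if so, $\alpha$ represents a class in $GH^k_{BC}(M)$ mapping to $[\alpha]_{\dbar}$, and we are done. This is exactly where one uses the compatible generalized metric together with the relation $\partial\dbar = -\dbar\partial$ and the conjugation/star symmetries: a $\Delta_{\dbar}$-harmonic form need not be $\partial$-closed in general, so instead I would argue at the level of dimensions. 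Using the Hodge-theoretic identification $GH^k_{\dbar}(M) \cong \ker\Delta_{\dbar}$ and $GH^k_{BC}(M) \cong \ker\Delta_{BC}$, it suffices to produce an injection $\ker\Delta_{\dbar} \cap U^k \hookrightarrow \ker\Delta_{BC} \cap U^k$. One verifies, by expanding $\Delta_{BC}$ as given, that $\ker\Delta_{BC} = \ker\partial \cap \ker\dbar \cap \ker(\partial\dbar)^* \cap \ker(\dbar^*\partial) \cap \ker(\partial^*\dbar)$ (or the relevant intersection of kernels, each term being a non-negative self-adjoint operator whose sum is $\Delta_{BC}$). Similarly $\ker\Delta_{\dbar} = \ker\dbar \cap \ker\dbar^*$. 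A $\Delta_{\dbar}$-harmonic form is $\dbar$-closed and $\dbar^*$-closed but may fail $\partial$-closedness; so the cleaner route is to use the $\dbar$-harmonic representative of a $\dbar$-class, decompose it via $\partial$-Hodge theory, and track which pieces survive — this is the standard Bott-Chern-to-Dolbeault comparison argument adapted to the $U^k$-grading.

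The main obstacle will be handling the fact that, unlike the classical case, a single operator $\partial\dbar$ raises and lowers the $\mathbb{Z}$-grading in opposite directions ($\partial$ raises by $1$, $\dbar$ lowers by $1$), so $\partial\dbar$ maps $U^k$ to $U^k$ and one must be careful that the map $GH^k_{BC} \to GH^k_{\dbar}$ is genuinely well-defined on the correct graded piece and that the Hodge decompositions respect the grading — which they do, since $*_G$ preserves each $U^k$ and hence all four Laplacians preserve each $U^k$. The cleanest closing argument: take $\alpha \in U^k$ with $\dbar\alpha = 0$; by $\partial$-Hodge theory write $\alpha = \partial\beta + \gamma$ with $\gamma \in \ker\Delta_{\partial}$; then $\dbar\gamma = \dbar\alpha - \dbar\partial\beta = \partial(\dbar\beta)$, and since $\gamma$ is $\partial^*$-closed while $\partial(\dbar\beta) \perp \ker\partial^* \supseteq \mathrm{im}\,\partial$... — one pushes this through to show $\gamma \in \ker d$, giving a class in $GH^k_{BC}$ with $[\gamma]_{\dbar} = [\alpha]_{\dbar}$, hence surjectivity. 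I expect the bookkeeping in this last step — ensuring the correction terms land in $\mathrm{im}\,\dbar$ rather than merely in $\mathrm{im}\,d$ — to be the technical heart of the argument, and to rely essentially on $\partial^2 = \dbar^2 = 0$ and $\partial\dbar = -\dbar\partial$.
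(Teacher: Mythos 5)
Your central strategy --- showing that the natural map $GH^k_{BC}(M)\to GH^k_{\dbar}(M)$, $[\alpha]_{BC}\mapsto[\alpha]_{\dbar}$, is surjective --- cannot work, because that map is \emph{not} surjective in general. Surjectivity would mean that every $\dbar$-class in $U^k$ admits a $d$-closed representative, and this already fails for ordinary complex manifolds: on the Iwasawa manifold the form $\phi^3=dz_3-z_1\,dz_2$ satisfies $\dbar\phi^3=0$ and $\partial\phi^3=-\phi^1\wedge\phi^2\neq 0$, and no element of $[\phi^3]_{\dbar}\subset U^1=\Omega^{1,0}\oplus\Omega^{2,1}\oplus\Omega^{3,2}$ can be $d$-closed (nothing in $\dbar(U^2)$ can cancel the $(2,0)$-component $\partial\phi^3$ of the differential). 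So the class $[\phi^3]_{\dbar}$ is not in the image, even though the inequality $Gh^1_{BC}\geq Gh^1_{\dbar}$ still holds there. Your fallback attempts do not repair this: there is no natural injection $\ker\Delta_{\dbar}\cap U^k\hookrightarrow\ker\Delta_{BC}\cap U^k$ (the kernels are $\ker\dbar\cap\ker\dbar^*$ and $\ker\partial\cap\ker\dbar\cap\ker(\partial\dbar)^*$ respectively, and neither contains the other), and in your final sketch the $\partial$-Hodge decomposition $\alpha=\partial\beta+\partial^*\delta+\gamma$ has a third term you dropped; even granting $\gamma\in\ker d$, you would need $\alpha-\gamma\in\operatorname{im}\dbar$ to conclude $[\gamma]_{\dbar}=[\alpha]_{\dbar}$, whereas $\alpha-\gamma$ lies in $\operatorname{im}\partial+\operatorname{im}\partial^*$.

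The inequality is genuinely \emph{not} provable by a single surjection or injection between these two groups; it is an indirect dimension count. The paper's proof introduces the six auxiliary spaces $A^\bullet,\dots,F^\bullet$ of Varouchas, uses the two exact sequences
$0\to A\to B\to GH_{\dbar}\to GH_{A}\to C\to 0$ and $0\to D\to GH_{BC}\to GH_{\dbar}\to E\to F\to 0$
to get $Gh^k_{A}=Gh^k_{\dbar}+a^k+c^k-b^k$ and $Gh^k_{BC}=Gh^k_{\dbar}+d^k+f^k-e^k$, and then combines these with the conjugation identities $d^k=b^{-k}$, $e^k=c^{-k}$ and the Hodge-star duality $Gh^k_{BC}=Gh^{-k}_{A}$ to obtain $2\,Gh^k_{BC}=2\,Gh^k_{\dbar}+f^k+a^{-k}\geq 2\,Gh^k_{\dbar}$. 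The only place Hodge theory enters is through the duality $Gh^k_{BC}=Gh^{-k}_A$ (and finite-dimensionality); it is the cancellation between the defects of injectivity and surjectivity across the two exact sequences, not the vanishing of either defect, that yields the inequality. You would need to adopt some version of this bookkeeping rather than seek a $d$-closed representative.
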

\begin{proof}
We follow the same strategy as in \cite{Angella-Tomassini13}, namely, we define
\begin{align*}
A^{\bullet} := \frac{\text{im }\dbar \cap \text{im }\partial}{\text{im }\partial\dbar},\quad
B^{\bullet} := \frac{\text{ker }\dbar \cap \text{im }\partial}{\text{im }\partial\dbar},\quad
C^{\bullet} := \frac{\text{ker }\partial\dbar}{\text{ker }\dbar+\text{im }\partial},
\end{align*}
and
\begin{align*}
D^{\bullet} := \frac{\text{im }\dbar \cap \text{ker }\partial}{\text{im }\partial\dbar},\quad
E^{\bullet} := \frac{\text{ker }\partial\dbar}{\text{ker }\partial + \text{im }\dbar},\quad
F^{\bullet} := \frac{\text{ker }\partial\dbar}{\text{ker }\dbar + \text{ker }\partial}.
\end{align*}
Note that conjugation induces the isomorphisms
$$D^k \cong B^{-k},\quad E^k \cong C^{-k},$$
and hence gives the equalities
$$d^k = b^{-k},\quad e^k = c^{-k}.$$

On the other hand, as in Varouchas \cite{Varouchas86}, we have the following exact sequences:
$$0 \to A^{\bullet} \to B^{\bullet} \to GH^{\bullet}_{\dbar}(M) \to GH^{\bullet}_{A}(M) \to C^{\bullet} \to 0$$
and
$$0 \to D^{\bullet} \to GH^{\bullet}_{BC}(M) \to GH^{\bullet}_{\dbar}(M) \to E^{\bullet} \to F^{\bullet} \to 0.$$
Each of the cohomologies is finite dimensional and, in particular, we have the following equalities between dimensions
\begin{align*}
Gh^k_{A} & = Gh_{\dbar}^k + a^k + c^k - b^k,\\
Gh^k_{BC} & = Gh^k_{\dbar} + d^k + f^k - e^k,
\end{align*}
for any $k \in [-n,n] \cap \bZ$.

Using these equalities together with \eqref{eq:dim_equalities_generalized_1}, we have
\begin{align*}
2 Gh^k_{BC}
& = Gh^k_{BC} + Gh^{-k}_{A}\\
& = Gh^k_{\dbar} + d^k + f^k - e^k + Gh^{-k}_{\dbar} + a^{-k} + c^{-k} - b^{-k}\\
& = Gh^k_{\dbar} + Gh^{-k}_{\dbar} + f^k + a^{-k}\\
& \geq Gh^k_{\dbar} + Gh^{-k}_{\dbar} = 2 Gh^k_{\dbar}.
\end{align*}
\end{proof}

\begin{lemma}
For fixed $p,q\geq 0$, the map
$$\iota^*:H^{p,q}_{BC^{\beta}}(M)\rightarrow H^{p,q}_{\dbar^{\beta}}(M)$$
induced by inclusion is injective if and only if $(M,\mathcal{J})$ satisfies the $\partial^{\beta}\dbar^{\beta}$-Lemma.
\end{lemma}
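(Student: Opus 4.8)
The plan is to pull everything back to the original $\bZ$-graded complex $(U^\bullet,\partial,\dbar)$ by means of the isomorphisms $H^{p,q}_{\sharp^{\beta}}(M)\cong GH^{p-q}_{\sharp}(M)$ recorded in Section~\ref{subsec:bigrading}. Under these, $\iota^*$ is identified with the natural map $GH^{p-q}_{BC}(M)\to GH^{p-q}_{\dbar}(M)$ induced by the inclusion $\text{ker}\,d\hookrightarrow\text{ker}\,\dbar$ of subspaces of $U^{p-q}$, since a $d^{\beta}$-closed element of $\mathcal{A}^{p,q}$ is just $\alpha\beta^q$ with $\alpha\in\text{ker}\,\partial\cap\text{ker}\,\dbar\cap U^{p-q}$. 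Writing $k=p-q$, the first step is a direct computation of the kernel of this map: a class represented by $\alpha\in\text{ker}\,\partial\cap\text{ker}\,\dbar\cap U^k$ dies in $GH^k_{\dbar}(M)$ exactly when $\alpha\in\text{im}\,\dbar\cap U^k$, and since $\text{im}\,\dbar\subseteq\text{ker}\,\dbar$ we obtain
$$\text{ker}\left(GH^k_{BC}(M)\to GH^k_{\dbar}(M)\right)=\frac{\text{ker}\,\partial\cap\text{im}\,\dbar\cap U^k}{\text{im}(\partial\dbar)\cap U^k}.$$
Hence $\iota^*$ is injective on $U^k$ if and only if $\text{ker}\,\partial\cap\text{im}\,\dbar\cap U^k=\text{im}(\partial\dbar)\cap U^k$, which is precisely one of the two defining identities of the generalized $\partial\dbar$-Lemma on $U^k$ (Definition~\ref{defn:ddbar-lemma}).

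Granting this reduction, the implication ``the $\partial^{\beta}\dbar^{\beta}$-Lemma holds $\Rightarrow$ $\iota^*$ is injective'' is immediate: by Lemma~\ref{lem:ddbar_vs_ddbarbeta} the $\partial^{\beta}\dbar^{\beta}$-Lemma is equivalent to the generalized $\partial\dbar$-Lemma, and the latter in particular contains the identity named above for every $k\in[-n,n]\cap\bZ$.

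For the converse I would feed the identity produced by injectivity through the conjugation $C\colon U^k\to U^{-k}$. Since $C\partial=\dbar C$ and $C\dbar=\partial C$, the map $C$ carries $\text{ker}\,\partial\cap\text{im}\,\dbar\cap U^k$ isomorphically onto $\text{ker}\,\dbar\cap\text{im}\,\partial\cap U^{-k}$ and $\text{im}(\partial\dbar)\cap U^k$ onto $\text{im}(\partial\dbar)\cap U^{-k}$; thus injectivity of $\iota^*$ on $U^k$ also yields the complementary identity $\text{ker}\,\dbar\cap\text{im}\,\partial\cap U^{-k}=\text{im}(\partial\dbar)\cap U^{-k}$. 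Letting $k$ range over $[-n,n]\cap\bZ$, injectivity of $\iota^*$ in all the bidegrees in question therefore supplies both defining identities on every $U^k$, i.e.\ the generalized $\partial\dbar$-Lemma, hence the $\partial^{\beta}\dbar^{\beta}$-Lemma again by Lemma~\ref{lem:ddbar_vs_ddbarbeta}. I expect this last step to be the only delicate point: one must check that conjugation genuinely interchanges the two halves of the $\partial\dbar$-Lemma and that one is invoking injectivity in the correct range of bidegrees — it is precisely to make this transparent that I would argue on $U^\bullet$ rather than directly on the bicomplex $\mathcal{A}^{\bullet,\bullet}$, which is not symmetric in $\partial^{\beta}$ and $\dbar^{\beta}$.
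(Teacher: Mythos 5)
Your kernel computation is correct and is in substance identical to the paper's proof: writing $k=p-q$ and identifying $\mathcal{A}^{p,q}$ with $U^{k}$, the kernel of $\iota^*$ is
$$\bigl(\mathrm{ker}\,\partial^{\beta}\cap \mathrm{im}\,\dbar^{\beta}\cap\mathcal{A}^{p,q}\bigr)\big/\bigl(\mathrm{im}\,\partial^{\beta}\dbar^{\beta}\cap\mathcal{A}^{p,q}\bigr),$$
so injectivity is equivalent to the single identity $\mathrm{ker}\,\partial^{\beta}\cap \mathrm{im}\,\dbar^{\beta}\cap\mathcal{A}^{p,q}=\mathrm{im}\,\partial^{\beta}\dbar^{\beta}\cap\mathcal{A}^{p,q}$. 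That is exactly where the paper stops, declaring this to be ``the $\partial^{\beta}\dbar^{\beta}$-Lemma on $\mathcal{A}^{p,q}$'' for the \emph{fixed} bidegree. The gap in your proposal is in the converse direction: to manufacture the companion identity $\mathrm{ker}\,\dbar^{\beta}\cap \mathrm{im}\,\partial^{\beta}=\mathrm{im}\,\partial^{\beta}\dbar^{\beta}$ you conjugate, which carries $U^{k}$ to $U^{-k}$, i.e.\ moves you to bidegrees with $p'-q'=-k$; you then quantify over all $k$. This consumes the hypothesis ``$\iota^*$ is injective in every bidegree,'' whereas the lemma's hypothesis is injectivity at one fixed $(p,q)$. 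The resulting statement is true but global, and it is not the one the paper needs: in the proof of the main theorem the lemma's content is invoked one bidegree at a time (``$\phi^{p,q}_{\pm}$ injective $\Rightarrow$ the Lemma holds on $\mathcal{A}^{p,q}$ $\Rightarrow$ $\phi^{\cdot,\cdot}_{\mp}$ surjective in an adjacent bidegree''), so conjugation across antidiagonals cannot be used there.

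The honest fixed-bidegree statement is that injectivity of $\iota^*\colon H^{p,q}_{BC^{\beta}}\to H^{p,q}_{\dbar^{\beta}}$ is equivalent to the one identity $\mathrm{ker}\,\partial^{\beta}\cap \mathrm{im}\,\dbar^{\beta}\cap\mathcal{A}^{p,q}=\mathrm{im}\,\partial^{\beta}\dbar^{\beta}\cap\mathcal{A}^{p,q}$; the other identity is, by the same computation, equivalent to injectivity of the companion map $H^{p,q}_{BC^{\beta}}\to H^{p,q}_{\partial^{\beta}}$ in the \emph{same} bidegree, not to anything obtainable by conjugation. Your instinct that injectivity of $\iota^*$ alone only buys half of Definition of the $\partial^{\beta}\dbar^{\beta}$-Lemma is sound (the paper's own proof literally verifies only that half), but the fix is to pair $\iota^*=\phi^{p,q}_{-}$ with $\phi^{p,q}_{+}$, as the paper does in the next theorem, rather than to globalize. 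A minor caveat shared with the paper: the identifications $\mathrm{im}\,\dbar^{\beta}\cap\mathcal{A}^{p,q}\cong\mathrm{im}\,\dbar\cap U^{k}$ and $\mathrm{im}\,\partial^{\beta}\dbar^{\beta}\cap\mathcal{A}^{p,q}\cong\mathrm{im}\,\partial\dbar\cap U^{k}$ implicitly require $q\geq1$ (resp.\ $p,q\geq1$), since the first-quadrant bicomplex has no $\mathcal{A}^{p,-1}$ or $\mathcal{A}^{-1,q}$.
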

\begin{proof}
Let $[\alpha]_{BC^{\beta}}\in H^{p,q}_{BC^{\beta}}(M)$ with $[\alpha]_{\dbar^{\beta}}=0$. Then $\iota^*$ is injective if and only if $[\alpha]_{BC^{\beta}}=0$, if and only if $\alpha=\partial^{\beta}\dbar^{\beta}\gamma$ for some $\gamma$. That is, for any $\alpha\in ker(\partial)\cap Im(\dbar)\cap\mathcal{A}^{p,q}$, we have $\alpha\in Im(\partial^{\beta}\dbar^{\beta})$, which is just the $\partial^{\beta}\dbar^{\beta}$-Lemma on $\mathcal{A}^{p,q}$.
\end{proof}

\begin{theorem}
On a compact generalized complex manifold $(M,\mathcal{J})$ of real dimension $2n$, the equality
$$Gh^k_{BC}=Gh^k_{\dbar}$$
holds for all $k \in [-n,n] \cap \bZ$ if and only if $M$ satisfies the generalized $\partial\dbar$-Lemma.
\end{theorem}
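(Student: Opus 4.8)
The plan is to prove both directions by relating the generalized $\partial\dbar$-Lemma to the injectivity of the natural maps $H^{p,q}_{BC^\beta}(M)\to H^{p,q}_{\dbar^\beta}(M)$ via the bigraded complex $\mathcal{A}^{\bullet,\bullet}$, and then passing back and forth between the $\beta$-cohomologies and the $\mathbb{Z}$-graded generalized cohomologies using the isomorphisms $H^{p,q}_{\sharp^\beta}(M)\cong GH^{p-q}_\sharp(M)$ established in Section~\ref{subsec:bigrading}.

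For the \emph{if} direction: assuming the generalized $\partial\dbar$-Lemma, Lemma~\ref{lem:ddbar_vs_ddbarbeta} gives the $\partial^\beta\dbar^\beta$-Lemma on every $\mathcal{A}^{p,q}$, so by the previous lemma each $\iota^*:H^{p,q}_{BC^\beta}(M)\to H^{p,q}_{\dbar^\beta}(M)$ is injective. I would first argue that the $\partial^\beta\dbar^\beta$-Lemma in fact forces $\iota^*$ to be an \emph{isomorphism} on each $\mathcal{A}^{p,q}$ — one checks surjectivity by a standard diagram chase: given a $\dbar^\beta$-closed $\alpha$, the $\partial^\beta\dbar^\beta$-Lemma lets one modify it by an exact form to obtain a $d^\beta$-closed representative. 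Summing over $p-q=k$ and using $H^{p,q}_{BC^\beta}\cong GH^{p-q}_{BC}$, $H^{p,q}_{\dbar^\beta}\cong GH^{p-q}_{\dbar}$ then yields $Gh^k_{BC}=Gh^k_{\dbar}$ for all $k$. Alternatively, and perhaps more cleanly, once the generalized $\partial\dbar$-Lemma is assumed one can run the exact-sequence bookkeeping from the first theorem's proof: the $\partial\dbar$-Lemma directly forces $A^k=B^k=C^k=D^k=E^k=F^k=0$ (each is a quotient of the form $(\text{something})/(\text{im }\partial\dbar)$ that collapses), so the equality $Gh^k_{BC}=Gh^k_{\dbar}+d^k+f^k-e^k=Gh^k_{\dbar}$ is immediate.

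For the \emph{only if} direction, which I expect to be the main obstacle: assume $Gh^k_{BC}=Gh^k_{\dbar}$ for all $k\in[-n,n]\cap\mathbb{Z}$. Revisiting the dimension identity $2Gh^k_{BC}=Gh^k_{\dbar}+Gh^{-k}_{\dbar}+f^k+a^{-k}$ from the proof of the first theorem, the equality hypothesis forces $f^k+a^{-k}=0$, hence $f^k=0$ and $a^k=0$ for all $k$ (using $a^{-k}\geq 0$ and $f^k\geq 0$). Now $a^k=0$ means $\text{im }\dbar\cap\text{im }\partial=\text{im }\partial\dbar$ on $U^k$, and $f^k=0$ means $\text{ker }\partial\dbar=\text{ker }\dbar+\text{ker }\partial$ on $U^k$; the conjugation symmetry then also gives $d^k=b^{-k}=a^{-k}+(\text{stuff})$... more carefully, I would need to chase the two Varouchas sequences again to extract that the remaining spaces $b^k,c^k,d^k,e^k$ also vanish. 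The cleanest route: from $a^k=0$ and the first exact sequence, $B^k\hookrightarrow GH^k_{\dbar}(M)$; combined with $f^k=0$ and the second sequence one deduces the maps $GH^k_{BC}\to GH^k_{\dbar}$ are isomorphisms, which transported to the $\beta$-picture says every $\iota^*:H^{p,q}_{BC^\beta}\to H^{p,q}_{\dbar^\beta}$ is injective (here one must be slightly careful: $Gh^k_{BC}=Gh^k_{\dbar}$ only gives injectivity of $\iota^*$ on the \emph{total} $k$-graded piece, but since $\mathcal{A}^{p,q}\cong U^{p-q}$ with the grading by $q$ respected by $\partial^\beta,\dbar^\beta$, injectivity on each bidegree follows). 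By the preceding lemma this is exactly the $\partial^\beta\dbar^\beta$-Lemma on every $\mathcal{A}^{p,q}$, and Lemma~\ref{lem:ddbar_vs_ddbarbeta} concludes that $M$ satisfies the generalized $\partial\dbar$-Lemma.

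The subtle point — and where I would spend the most care — is the bidegree refinement in the last step: knowing only the \emph{sum} $\sum_{p-q=k}h^{p,q}_{BC^\beta}=\sum_{p-q=k}h^{p,q}_{\dbar^\beta}$ together with the surjectivity of $\iota^*$ in each bidegree (which holds for free since $\mathcal{A}^{p,q}=U^{p-q}\beta^q$ and $\tau$ shows each class is hit) forces $\iota^*$ to be injective, hence bijective, in \emph{each} bidegree $(p,q)$ separately. This is what lets the global equality hypothesis upgrade to the bidegree-wise $\partial^\beta\dbar^\beta$-Lemma required by Lemma~\ref{lem:ddbar_vs_ddbarbeta}; without the bigrading of Cavalcanti this upgrade would not be available, which is precisely why that ingredient was introduced.
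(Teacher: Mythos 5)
Your ``if'' direction is fine and matches the paper. The genuine gap is in the ``only if'' direction, at exactly the point you flag. Your dimension bookkeeping correctly gives $f^k=a^k=0$ for all $k$, but the claimed ``cleanest route'' from there does not go through: in the second Varouchas sequence the kernel of the inclusion-induced map $GH^k_{BC}(M)\to GH^k_{\dbar}(M)$ is $D^k$, and with $f^k=0$ the dimension count only yields $d^k=e^k$ (likewise $a^k=0$ only yields $b^k=c^k$), not $d^k=0$. So nothing you have derived shows that the natural maps $GH^k_{BC}(M)\to GH^k_{\dbar}(M)$ are injective, and this is precisely the crux of the theorem: equality of (finite) dimensions does not by itself force an inclusion-induced map to be an isomorphism. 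Your attempted repair in the last paragraph --- that surjectivity of $\iota^*:H^{p,q}_{BC^{\beta}}(M)\to H^{p,q}_{\dbar^{\beta}}(M)$ ``holds for free'' because $\mathcal{A}^{p,q}=U^{p-q}\beta^q$ and $\tau$ hits every class --- is false. Surjectivity of $\iota^*$ says that every $\dbar^{\beta}$-closed form is $\dbar^{\beta}$-cohomologous to a $d^{\beta}$-closed one, which is a nontrivial statement essentially on par with the lemma itself; the map $\tau$ only embeds $U^k$ diagonally into $\bigoplus_{p-q=k}\mathcal{A}^{p,q}$ and says nothing about representatives of $\dbar^{\beta}$-classes. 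Indeed, if surjectivity were automatic, then equal dimensions would immediately give injectivity in every bidegree and the whole theorem would be trivial --- a sign that this step cannot be free.

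What the paper does instead, and what is missing from your proposal, is a staircase (zig-zag) induction on Cavalcanti's bigraded complex: one shows that if $\phi^{p,q}_+:H^{p,q}_{BC^{\beta}}\to H^{p,q}_{\partial^{\beta}}$ is injective then $\phi^{p-1,q}_-$ is surjective, and if $\phi^{p,q}_-$ is injective then $\phi^{p,q-1}_+$ is surjective (both via the $\partial^{\beta}\dbar^{\beta}$-Lemma on $\mathcal{A}^{p,q}$, which injectivity provides). Under the hypothesis $Gh^k_{BC}=Gh^k_{\dbar}$ all the relevant spaces $H^{p,q}_{BC^{\beta}},H^{p,q}_{\partial^{\beta}},H^{p,q}_{\dbar^{\beta}}$ have equal dimension, so ``not surjective'' and ``not injective'' coincide; hence a failure of $\phi^{r,s}_-$ to be an isomorphism propagates to $\phi^{r+j,s+j}_-$ for all $j\geq 0$, contradicting the triviality of these cohomologies for $p,q\gg 0$. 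This propagation argument is the key idea that converts the global dimension equality into injectivity of $\iota^*$ in every bidegree, after which your final appeal to Lemma \ref{lem:ddbar_vs_ddbarbeta} is exactly right. To fix your proof you would need to supply this (or an equivalent) mechanism; the observation $a^k=f^k=0$, while correct, is not sufficient on its own.
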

\begin{proof}
If the generalized $\partial\dbar$-Lemma holds, then the inclusion induces an isomorphism $GH^k_{BC}(M)\cong GH^k_{\dbar}(M)$, and hence $Gh^k_{BC}=Gh^k_{\dbar}$ for every $k \in [-n,n] \cap \bZ$.

To prove the converse, we make use of the bigrading introduced in Section \ref{subsec:bigrading}; similar arguments were used in \cite{CHT13}.
We first fix $k$. Then, for any $p,q \in \bN$ such that $p-q=k$, we define the following maps
\begin{align*}
\phi^{p,q}_+ & :H^{p,q}_{BC^{\beta}}(M)\rightarrow H^{p,q}_{\partial^{\beta}}(M),\\
\phi^{p,q}_- & :H^{p,q}_{BC^{\beta}}(M)\rightarrow H^{p,q}_{\dbar^{\beta}}(M),
\end{align*}
where $\phi^{p,q}_{\pm}$ are induced by inclusions.

We claim that, for fixed $p,q\geq 0$, if $\phi^{p,q}_+$ is injective, then $\phi^{p-1,q}_-$ is surjective and, if $\phi^{p,q}_-$ is injective, then $\phi^{p,q-1}_+$ is surjective. To prove the first assertion, we suppose that $\phi^{p,q}_+$ is injective. Then the $\partial^{\beta}\dbar^{\beta}$-Lemma holds on $\mathcal{A}^{p,q}$. Pick any $\alpha\in ker(\dbar^{\beta})\cap\mathcal{A}^{p-1,q}$. Then we have
$$\partial^{\beta}\alpha\in \text{im }\partial^{\beta}\cap \text{ker }\dbar^{\beta} \cap \mathcal{A}^{p,q} = \text{im }\partial^{\beta}\dbar^{\beta} \cap \mathcal{A}^{p,q}.$$
Hence there exists $\gamma\in\mathcal{A}^{p-1,q-1}$ such that $\partial^{\beta}\alpha=\partial^{\beta}\dbar^{\beta}\gamma$. Therefore, $\alpha-\dbar^{\beta}\gamma\in \text{ker }\partial^{\beta} \cap \text{ker }\dbar^{\beta} \cap \mathcal{A}^{p-1,q}$ and
$$\phi^{p-1,q}_-([\alpha-\dbar^{\beta}\gamma]_{BC^{\beta}})=[\alpha-\dbar^{\beta}\gamma]_{\dbar^{\beta}}=[\alpha]_{\dbar^{\beta}},$$
which proves the surjectivity of $\phi^{p-1,q}_-$.
The second assertion can be proved in exactly the same way.

Now suppose that $Gh^k_{BC}=Gh^k_{\dbar}$. Then we have
$$h^{p,q}_{BC^{\beta}}=Gh^k_{BC}=Gh^k_{\dbar}=h^{p,q}_{\dbar^{\beta}}=h^{p,q}_{\partial^{\beta}}.$$
We need to show that the map $\phi^{p,q}_-$ is an isomorphism for all $p,q\geq 0$, and we will prove this by contradiction.

So suppose that $\phi^{r,s}_-$ is not an isomorphism for some $r,s\geq 0$. Then $\phi^{r,s}_-$ is neither injective nor surjective by our dimension assumption.
Since $\phi^{r,s}_-$ is not surjective, our claim above shows that $\phi^{r+1,s}_+$ is not injective and hence not surjective.
Applying the claim again, we see that $\phi^{r+1,s+1}_-$ is again neither injective nor surjective. By induction, this implies that $\phi^{r+j,s+j}_-$ cannot be an isomorphism for all $j\geq 0$. However, this is impossible since $H^{p,q}_{\sharp^\beta}(M)$ is trivial for $p,q \gg 0$.

We thus conclude that $\phi^{p,q}_-$ is an isomorphism for all $p,q\geq 0$.
Therefore, the $\partial^{\beta}\dbar^{\beta}$-Lemma holds on $\mathcal{A}^{p,q}$ for all $p,q\geq 0$, which is equivalent to the validity of the generalized $\partial\dbar$-Lemma by Lemma \ref{lem:ddbar_vs_ddbarbeta}.
\end{proof}

\bibliographystyle{amsplain}
\bibliography{geometry}

\end{document}